\newtheorem{theorem}{Theorem}
\theoremstyle{plain}
\newtheorem{lemma}{Lemma}
\newtheorem{problem}{Problem}
\newtheorem{remark}{Remark}
\numberwithin{equation}{section}
\begin{document}

\title[Do Fermat constants determine Clairaut constants?]{When do Fermat constants completely determine Clairaut constants for branching geodesics on a surface of revolution?}

\author{Anastasios N. Zachos}

\address{University of Patras, Department of Mathematics, GR-26500 Rion, Greece}
 \email{azachos@gmail.com}

\dedicatory{To the geometric vision of Fermat and Clairaut}

\keywords{Clairaut relation, surfaces of revolution, branching geodesics, geodesic tree, Fermat constants, Clairaut constants} \subjclass{58E10, 53A04.}
\begin{abstract}
We prove that Fermat constants do not completely determine Clairaut constants for three branching geodesics that meet at the weighted Fermat-Torricelli point on a surface of revolution, except the case of a standard sphere in $\mathbb{R}^{3}.$

\end{abstract}\maketitle

\section{Introduction}

We start by considering a surface of revolution $S$ in $\mathbb{R}^{3},$ in a parametric form, which is derived by
rotating the curve $\gamma(u)=(\phi(u)),\psi(u)),$ $x=\phi(u),z=\psi(u)$  lying in the $x,z$ plane about the $z-axis$ (\cite[Chapter~VII,Section~6]{Pogorelov:59}),\cite[p.~212]{Spivak:79}\cite[Section~2.6.1, Example~2.7.3]{VToponogov:05}:

$r(u,v)= (\phi(u)\cos v, \phi(u)\sin(v),\psi(u)).$

Meridians are called the curves of intersection of $S$ with planes passing through the $z-$axis.

Parallels are called the curves of intersection of $S$ with planes perpendicular to the $z-$axis.

We note that meridians and parallels are the curves $v=constant$ and $u=constant$ of $S,$ respectively and they form an orthogonal net, because $F=0.$
Thus, meridians and parallels are lines of curvature. Furthermore, it is well known that meridians on $S$ are geodesics.
$If \gamma$ is a convex curve, then we obtain a convex surface of revolution having as geodesics (i) meridians,(ii)  closed curves and (iii) geodesics dense in a ring $r\ge c$ (\cite[pp.~85-86]{Arnold:89}, \cite[Theorem~5.3,p.~354]{O'Neill:06}).

A fundamental property for the qualitative behaviour of geodesics on $S$ is given by Clairaut's theorem. There are two similar types of Clairaut's theorem mentioned at most classical books in differential geometry having a cosine relation or a sine relation, respectively.

Let $r(t)$ be the arc length parametrization of a geodesic $\sigma(t)$ on $S.$ We denote by $\rho(t_{0})$ the distance from a point $\sigma (t_{0})$ to the $z-$axis of rotation and by $\alpha(t_{0})$ the angle between $\sigma(t)$ and a parallel at the point of their intersection $\sigma(t_{0})$
The first type of Clairaut's theorem states that (\cite[Example~2.7.3,(2.96), p.~123]{VToponogov:05},\cite[p.~257]{Do Carmo:76},\cite[p.~137]{Pogorelov:59}):
\[\rho(t_{0})\cos\alpha(t_{0})=c, \] where $c$ is a constant number (Clairaut's cosine relation).

We denote by $\beta(t_{0})$ the angle between $\sigma(t)$ and a meridian at the point of their intersection $\sigma(t_{0}).$

The second type of Clairaut's theorem states that (\cite[p.~214]{Spivak:79},\cite[pp.~216-225]{Oprea:07},\cite[Theorem~5.3]{O'Neill:06},\cite[p.~134]{Stuik:61}):
\[\rho(t_{0})\sin\beta(t_{0})=c, \]
where $c$ is a constant number (Clairaut's sine relation).

We call Clairaut constant the constant number $c,$ which satisfies Clairaut's (cosine or sine) relation.

In the 17th century, Fermat stated the first problem on the theory of branching geodesics in $\mathbb{R}^{2}.$ The existence, uniqueness and properties of the Fermat solution on a surface $S$ in $\mathbb{R}^{3}$ are studied in \cite[Theorem~1.2, (1.2.1),(1.2.2),Definition,pp.~510-511]{Karcher:77}, \cite{IvanTuzh:092}, \cite{IvanovTuzhilin:01b}, \cite{Cots/Zach:11},\cite[Proposition~1,p.~45]{Zachos/Cots:10}, by using variational methods.

Let $A_{1},A_{2},A_{3}$ be three fixed non-collinear points that do not lie on the same geodesic on a $C^{2}$ surface $S$ in $\mathbb{R}^{3}$ and a positive real number $b_{i}$ corresponds to the point $A_{i}.$ We denote by $\sigma_{i0}$ the length of the geodesic arc $A_{i}A_{0}$ for $i=1,2,3.$

\begin{problem}[Weighted Fermat problem for three non-collinear points on $S$ {\cite{Cots/Zach:11},\cite[Problem~1,p.~45]{Zachos/Cots:10}}]
Find a point $A_{0}$ (weighted Fermat-Torricelli point), such that
\begin{equation} \label{eq:001}
f(A_{0})=\sum_{i=1}^{3}b_{i}\sigma_{0i}\to min.
\end{equation}
\end{problem}

\begin{problem}[The inverse weighted Fermat-Torricelli problem on $S$ {\cite{Cots/Zach:11},\cite[Problem~2,p.~52]{Zachos/Cots:10}}]
Given a point $A_{0}\notin \{A_{1},A_{2},A_{3}\}$ on $S$
does there exist a unique set of positive real numbers (weights) $b_{i0},$ such
that:
\[b_{10}+b_{20}+b_{30}=1>0,\]
\[f(A_{0})=\sum_{i=1}^{3}b_{i0}\sigma_{i0} \to min.\]

\end{problem}
The existence and uniqueness of the weighted Fermat-Torricelli point is proved in \cite{Cots/Zach:11},\cite[Proposition~1,pp.~45]{Zachos/Cots:10}.

A positive answer for the inverse weighted Fermat-Torricelli problem on $S$ is given in \cite{Cots/Zach:11},\cite[Proposition~5,p.~52]{Zachos/Cots:10}.
We call Fermat constants the unique triad of weights $\{b_{10},b_{20},b_{30}\},$ which give the unique solution for the inverse weighted Fermat-Torricelli problem on $S.$
The three branching geodesics $A_{1}A_{0},$ $A_{2}A_{0},$ $A_{3}A_{0},$ which meet at the unique weighted Fermat-Torricelli point $A_{0}$ form a unique weighted geodesic (Fermat-Torricelli) tree on $S.$

In this paper, we show that the only case that Fermat constants completely determine Clairaut constants for three branching geodesics that meet at a point on a surface of revolution is the standard sphere in $\mathbb{R}^{3}.$

\section{Fermat Constants completely determine Clairaut constants for branching geodesics on a standard sphere}

In this section, we shall prove that the only case that Fermat constants completely determine Clairaut constants for three branching geodesics that meet at the weighted Fermat-Torricelli point is the standard sphere in $\mathbb{R}^{3}.$

Let $\{A_{1},A_{2},A_{3}\}$ be three fixed non-collinear points on $S,$ which do not lie on a geodesic and $A_{0}\notin \{A_{1},A_{2},A_{3}\}$ be a point on $S$. We denote by $b_{i}$ the weight, which corresponds to $A_{i},$ by $\phi_{i0j}$ the angle between the geodesic arcs $A_{i}A_{0}$ and $A_{j}A_{0},$ and by $\vec{U}_{A_{i}A_{j}}$ the unit tangent vector of the geodesic arc $A_{i}A_{j}$ at $A_{i}$ for $i,j=1,2,3.$ We assume that there is a positive number $I,$ such that the injectivity radius of $S$ $inj(S):=I.$ Thus, we may consider a bounded region $D$ on $S$, which is a subset of a disk $B_{X;I}$ with center $X$ and radius $R=I,$ such that $\{A_{0,}A_{1},A_{2},A_{3}\}\in D.$

We need the following three lemmas, which also hold on a $C^{2}$ surface in $\mathbb{R}^{3},$ in order to prove the two main results (Theorems~\ref{mainres1},~\ref{mainres2}).

\begin{lemma}[Characterization of the weighted Fermat-Torricelli solution {\cite{Cots/Zach:11},\cite[Proposition~6, p.~53]{Zachos/Cots:10}}] \label{lem1}
(I) and (II) conditions are equivalent:

(I) All the following inequalities are satisfied simultaneously:
\[
\left\|
b_{2}\vec{U}_{A_{1}A_{2}}+b_{3}\vec{U}_{A_{1}A_{3}}\right\|>
b_{1},
\]
\[
\left\|
b_{1}\vec{U}_{A_{2}A_{1}}+b_{3}\vec{U}_{A_{2}A_{3}}\right\|>
b_{2},
\]

\[
\left\|
b_{1}\vec{U}_{A_{3}A_{1}}+b_{2}\vec{U}_{A_{3}A_{2}}\right\|>
b_{3},
\]
(II) The point $A_{0}\notin \{A_{1},A_{2},A_{3}\}$
(weighted Fermat-Torricelli point) and does not
belong to the geodesic arcs $A_{1}A_{2},$ $A_{2}A_{3}$
and $A_{1}A_{3}.$
\end{lemma}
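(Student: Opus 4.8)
The plan is to derive both conditions from the first variation of the weighted length functional $f(A_{0})=\sum_{i=1}^{3}b_{i}\sigma_{i0}$, exploiting that inside the region $D\subset B_{X;I}$ every pair of points is joined by a unique minimizing geodesic, so that each distance function $\sigma_{i0}$ is smooth away from its own vertex $A_{i}$ and its gradient at a point $p\neq A_{i}$ is the unit tangent pointing away from $A_{i}$, namely $-\vec{U}_{pA_{i}}$. First I would record the first variation formula: if $A_{0}$ is displaced in a unit tangent direction $\vec{w}\in T_{A_{0}}S$, then the directional derivative of $\sigma_{i0}$ equals $-\langle \vec{U}_{A_{0}A_{i}},\vec{w}\rangle$ whenever $A_{0}\neq A_{i}$.

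Second, I would analyze the behaviour of $f$ at a vertex, say $A_{1}$, where $\sigma_{10}$ fails to be differentiable because it behaves like a distance-to-a-point cone. Moving $A_{0}$ away from $A_{1}$ in a unit direction $\vec{w}$, the term $b_{1}\sigma_{10}$ grows at the rate $b_{1}$ in every direction, while $b_{2}\sigma_{20}+b_{3}\sigma_{30}$ is smooth and contributes $-\langle b_{2}\vec{U}_{A_{1}A_{2}}+b_{3}\vec{U}_{A_{1}A_{3}},\vec{w}\rangle$. Hence the one-sided directional derivative of $f$ at $A_{1}$ is $b_{1}-\langle b_{2}\vec{U}_{A_{1}A_{2}}+b_{3}\vec{U}_{A_{1}A_{3}},\vec{w}\rangle$, and this is nonnegative for every unit $\vec{w}$ precisely when $b_{1}\ge\bigl\|b_{2}\vec{U}_{A_{1}A_{2}}+b_{3}\vec{U}_{A_{1}A_{3}}\bigr\|$, the maximum of the pairing over $\vec{w}$ being attained along the resultant. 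Thus $A_{1}$ is a local minimizer of $f$ if and only if the first inequality of (I) is reversed (holds with $\le$), and analogously for $A_{2}$ and $A_{3}$.

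Third, I would rule out the possibility that the minimizer lies on an open arc, say $A_{1}A_{2}$. There both $\sigma_{10}$ and $\sigma_{20}$ are smooth and their tangents are opposite, $\vec{U}_{A_{0}A_{1}}=-\vec{U}_{A_{0}A_{2}}$, so the gradient of $f$ reduces to a purely along-arc component together with $-b_{3}\vec{U}_{A_{0}A_{3}}$; showing that this gradient cannot vanish, so that no interior arc point is a critical point, is the analogue of the vertex computation and uses the standing non-collinearity hypothesis on $\{A_{1},A_{2},A_{3}\}$.

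Finally, I would invoke the cited existence and uniqueness of the weighted Fermat--Torricelli point, together with the convexity of $f$ on $D$, to pass from these local statements to the global equivalence. If all three inequalities in (I) hold, then no vertex and no arc point is a minimizer, so the unique minimizer $A_{0}$ satisfies (II); conversely, if (II) holds then no vertex is the global minimizer, and by the directional-derivative criterion each inequality in (I) must be strict. The main obstacle I anticipate is the careful treatment of the non-smooth points, the vertices and the arc interiors, where gradients must be replaced by one-sided directional derivatives and local minimality must be shown equivalent to the displayed norm inequalities; once this is in place, the promotion from local to global follows cleanly from uniqueness.
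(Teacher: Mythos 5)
The paper does not actually prove this lemma; it is imported verbatim from \cite{Cots/Zach:11} and \cite[Proposition~6]{Zachos/Cots:10}, whose argument is the same first-variation (variational) analysis you outline, so your route matches the source rather than departing from it. Your sketch is correct in outline: the one-sided derivative $b_{1}-\langle b_{2}\vec{U}_{A_{1}A_{2}}+b_{3}\vec{U}_{A_{1}A_{3}},\vec{w}\rangle$ at a vertex, its minimization over unit $\vec{w}$ along the resultant, and the use of the hypothesis that $A_{1},A_{2},A_{3}$ do not lie on a common geodesic to kill critical points in the open arcs are exactly the right computations. The one step you should not leave implicit is the convexity of $f$ on $D$: it is what makes first-order stationarity at a vertex equivalent to global minimality (the direction ``not (I) implies not (II)''), and it does not follow from the injectivity-radius hypothesis alone --- geodesic distance functions are convex only on balls whose radius is also controlled by an upper curvature bound, which is precisely the additional input the cited propositions supply for the region $D$.
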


Lemma~\ref{lem1} gives us the ability to select a triad of weights (Fermat constants) $\{b_{1},b_{2},b_{3}\},$ such that the weighted Fermat-Torricelli point $A_{0}\notin \{A_{1},A_{2},A_{3}\}.$

\begin{lemma}[Unique solution for the weighted Fermat-Torricelli problem on $S$ {\cite{Cots/Zach:11},\cite[Proposition~2, p.~48]{Zachos/Cots:10}}]\label{lem2}
If  $A_{0}\notin \{A_{1},A_{2},A_{3}\},$ then each angle $\phi_{i0j}$ is expressed as a
function of $b_{1},$ $b_{2}$ and $b_{3}:$
\begin{equation}\label{cosi0j}
\phi_{i0j}=\arccos\left(\frac{b_{k}^2-b_{i}^2-b_{j}^2}{2b_{i}b_{j}}\right)
\end{equation}
for $i,j,k=1,2,3,$ and $k\ne i\ne j.$
\end{lemma}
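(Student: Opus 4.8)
The plan is to derive the angle formula \eqref{cosi0j} directly from the first-order (variational) condition satisfied by the minimizer $A_{0}$. Since $A_{0}\notin\{A_{1},A_{2},A_{3}\}$ and all four points lie in the region $D\subset B_{X;I}$, where the injectivity radius assumption guarantees that the geodesic arc joining any two of them is unique, each distance function $\sigma_{0i}$ is smooth in a neighbourhood of $A_{0}$. By the first variation of arc length, its gradient on $S$ at $A_{0}$ is the outward unit tangent $-\vec{U}_{A_{0}A_{i}}$, where $\vec{U}_{A_{0}A_{i}}\in T_{A_{0}}S$ denotes the unit tangent of the geodesic arc $A_{0}A_{i}$ at $A_{0}$ pointing toward $A_{i}$.

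First I would write the stationarity condition $\nabla_{A_{0}}f=\vec{0}$. Since $f=\sum_{i}b_{i}\sigma_{0i}$, this becomes the weighted balance condition
\[
b_{1}\vec{U}_{A_{0}A_{1}}+b_{2}\vec{U}_{A_{0}A_{2}}+b_{3}\vec{U}_{A_{0}A_{3}}=\vec{0}
\]
in the two-dimensional tangent plane $T_{A_{0}}S$; the existence of such an interior minimizer is ensured by Lemma~\ref{lem1}. To extract the pairwise angles I would then fix $\{i,j,k\}=\{1,2,3\}$, isolate one summand as $b_{k}\vec{U}_{A_{0}A_{k}}=-\left(b_{i}\vec{U}_{A_{0}A_{i}}+b_{j}\vec{U}_{A_{0}A_{j}}\right)$, and take the squared Euclidean norm of both sides. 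Using $\|\vec{U}_{A_{0}A_{i}}\|=\|\vec{U}_{A_{0}A_{j}}\|=1$ and $\vec{U}_{A_{0}A_{i}}\cdot\vec{U}_{A_{0}A_{j}}=\cos\phi_{i0j}$ yields $b_{k}^{2}=b_{i}^{2}+b_{j}^{2}+2b_{i}b_{j}\cos\phi_{i0j}$, and solving for $\phi_{i0j}$ gives exactly \eqref{cosi0j}. Cycling through the three admissible index choices produces all three angles.

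There is no genuine obstacle here; the only points requiring care are bookkeeping ones. I must verify that the argument of $\arccos$ lies in $[-1,1]$ so that $\phi_{i0j}$ is well defined, and this is precisely where the hypothesis enters: the existence of an interior minimizer forces the weights to obey the triangle inequalities $|b_{i}-b_{j}|\le b_{k}\le b_{i}+b_{j}$, which are equivalent to $|b_{k}^{2}-b_{i}^{2}-b_{j}^{2}|\le 2b_{i}b_{j}$ and are in turn encoded in the inequalities of Lemma~\ref{lem1}. As a consistency check that the correct branch of $\arccos$ has been selected, I would note that the three resulting angles sum to $2\pi$ around $A_{0}$; in the unweighted case $b_{1}=b_{2}=b_{3}$ each cosine equals $-\tfrac{1}{2}$, recovering the classical $120^{\circ}$ splitting at the Fermat point. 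Once the balance condition is established, the remainder is elementary planar vector algebra in $T_{A_{0}}S$.
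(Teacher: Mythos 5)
The paper does not reprove this lemma --- it only cites \cite{Cots/Zach:11} and \cite{Zachos/Cots:10} --- and your derivation (first variation of $f$ giving the balance condition $b_{1}\vec{U}_{A_{0}A_{1}}+b_{2}\vec{U}_{A_{0}A_{2}}+b_{3}\vec{U}_{A_{0}A_{3}}=\vec{0}$ in $T_{A_{0}}S$, then taking squared norms to get $\cos\phi_{i0j}=\frac{b_{k}^{2}-b_{i}^{2}-b_{j}^{2}}{2b_{i}b_{j}}$) is precisely the standard argument of the cited source. Your proof is correct, including the remark that the inequalities of Lemma~\ref{lem1} force the $\arccos$ argument into $[-1,1]$.
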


\begin{lemma}[Unique solution for the inverse weighted Fermat-Torricelli problem on $S$ {\cite{Cots/Zach:11},\cite[Proposition~5, p.~52]{Zachos/Cots:10}}]\label{lem3}
The weight $b_{i}$ are uniquely determined by the formula:
\begin{equation}\label{inverse111}
b_{i}=\frac{C}{1+\frac{\sin{\phi_{i0k}}}{\sin{\phi_{j0k}}}+\frac{\sin{\phi_{i0j}}}{\sin{\phi_{j0k}}}},
\end{equation}
where
\begin{equation}\label{ratioji2}
\frac{b_{j}}{b_{i}}=\frac{\sin{\phi_{i0k}}}{\sin{\phi_{j0k}}}.
\end{equation}
for $i,j,k=1,2,3.$
\end{lemma}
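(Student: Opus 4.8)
The plan is to obtain both displayed identities from the single first-order balance condition that characterizes the weighted Fermat-Torricelli point and that already underlies Lemma~\ref{lem2}. Because $A_0\notin\{A_1,A_2,A_3\}$ and, by Lemma~\ref{lem1}, $A_0$ lies on none of the geodesic arcs $A_1A_2,A_2A_3,A_1A_3$, the three branching geodesics leave $A_0$ in three distinct directions of the tangent plane $T_{A_0}S$. Writing $\vec{U}_{A_0A_i}$ for the unit tangent of $A_iA_0$ at $A_0$, the vanishing of the first variation of $f$ at the minimizer $A_0$ gives the coplanar equilibrium
\[
b_1\vec{U}_{A_0A_1}+b_2\vec{U}_{A_0A_2}+b_3\vec{U}_{A_0A_3}=\vec{0},
\]
where the angle between $\vec{U}_{A_0A_i}$ and $\vec{U}_{A_0A_j}$ is $\phi_{i0j}$.

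The key step is to read this three-term vanishing sum as a closed triangle of vectors. Laying the summands $b_1\vec{U}_{A_0A_1},b_2\vec{U}_{A_0A_2},b_3\vec{U}_{A_0A_3}$ head to tail produces a triangle with side lengths $b_1,b_2,b_3$ whose interior angles are the supplements $\pi-\phi_{i0j}$ of the angles between consecutive geodesics; this is the same force triangle whose law-of-cosines form yields \eqref{cosi0j} in Lemma~\ref{lem2}. Applying instead the law of sines and using $\sin(\pi-\phi_{i0j})=\sin\phi_{i0j}$, I get $b_k/\sin\phi_{i0j}$ constant over $\{i,j,k\}=\{1,2,3\}$. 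Since the side $b_i$ is opposite the angle $\phi_{j0k}$ and $b_j$ is opposite $\phi_{i0k}$, this is precisely the ratio relation \eqref{ratioji2}.

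To pass to the closed form \eqref{inverse111}, I would put $C=b_1+b_2+b_3$ and divide $C=b_i+b_j+b_k$ through by $b_i$ to get $b_i=C/(1+b_j/b_i+b_k/b_i)$, then substitute $b_j/b_i=\sin\phi_{i0k}/\sin\phi_{j0k}$ together with $b_k/b_i=\sin\phi_{i0j}/\sin\phi_{j0k}$ from \eqref{ratioji2}. Uniqueness is then immediate: the angles $\phi_{i0j}$ are determined by $A_0$ and the fixed points $A_i$ and satisfy $\phi_{102}+\phi_{203}+\phi_{103}=2\pi$, so the two independent sine ratios pin the weights down up to the single scale $C$, which is fixed by the normalization $b_1+b_2+b_3=1$.

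I expect the only real obstacle to be the nondegeneracy bookkeeping rather than any deep analytic point: I must check that each $\phi_{i0j}\in(0,\pi)$, so that every denominator $\sin\phi_{j0k}$ in \eqref{inverse111} and \eqref{ratioji2} is strictly positive and the force triangle does not collapse onto a segment. This is exactly what the hypothesis $A_0\notin\{A_1,A_2,A_3\}$ together with Lemma~\ref{lem1} (with $A_0$ off each arc $A_iA_j$) guarantees, and it is also what makes the inversion from the angles back to the weights well posed and single valued.
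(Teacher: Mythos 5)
Your argument is correct, and it is essentially the intended one: the paper itself gives no proof of Lemma~\ref{lem3} (it imports the statement from \cite{Cots/Zach:11} and \cite{Zachos/Cots:10}), but the cited derivation proceeds exactly as you do, reading the vanishing first variation $b_{1}\vec{U}_{A_{0}A_{1}}+b_{2}\vec{U}_{A_{0}A_{2}}+b_{3}\vec{U}_{A_{0}A_{3}}=\vec{0}$ as a closed triangle with sides $b_{i}$ and interior angles $\pi-\phi_{i0j}$, extracting \eqref{ratioji2} from the law of sines (and \eqref{cosi0j} from the law of cosines), and then normalizing $b_{1}+b_{2}+b_{3}=C$ to obtain \eqref{inverse111}. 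Your nondegeneracy bookkeeping via Lemma~\ref{lem1}, which keeps each $\phi_{i0j}$ strictly inside $(0,\pi)$ so that no $\sin\phi_{j0k}$ vanishes, is precisely the point that makes the inversion single valued.
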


We consider a coordinate system $A_{0}XY$ having as axes a parallel $XX^{\prime}$ ($X$ may coincide with $X^{\prime}$) and a meridian $YY^{\prime}$ on $S,$ which intersect at $A_{0}$ orthogonally. Hence, we may consider $A_{0}$ as the origin of the system. We note that this coordinate system divides $S$ into four quadrants, counting each quadrant in the usual way like in $\mathbb{R}^{2}.$ Without loss of generality, we assume that $A_{1}$ belongs to the second quadrant, $A_{2}$ to the fourth quadrant and $A_{3}$ to the third quadrant. Hence, the symmetrical points $A_{1}^{\prime},$ $A_{2}^{\prime},$ $A_{3}^{\prime},$ with respect to $A_{0}$ belong to the fourth, second and first quadrant, respectively, such that $A_{i}A_{0}A_{i}^{\prime}$ form a unique geodesic arc on $S,$ and $A_{i},A_{0},A_{i}^{\prime}\in D,$ for $i=1,2,3.$

Furthermore, we assume that $\frac{\pi}{2}<\phi_{102},\phi_{203},\phi_{301}<\pi$ and we consider as positive orientation for the angle counting clockwise, with respect to $A_{0}.$
We set $\alpha_{1}\equiv \angle A_{1}A_{0}X,$ $\alpha_{2}\equiv \angle A_{2}A_{0}X^{\prime},$ $\alpha_{3}\equiv \angle A_{3}A_{0}X^{\prime},$
$\beta_{1}\equiv \angle A_{1}A_{0}Y,$ $\beta_{2}\equiv \angle A_{2}A_{0}Y^{\prime},$ $\beta_{3}\equiv \angle A_{3}A_{0}Y.$

Hence, we obtain three branching geodesics $\{A_{1}A_{0},A_{2}A_{0},A_{3}A_{0}\}\in D$, which intersect at $A_{0}.$

We set $w(x_,y_,z)\equiv \frac{x^2+y^2-z^2}{2xy}.$

\begin{theorem}\label{mainres1} The Clairaut constants $\{c_{1},c_{2},c_{3}\}$ depend on the Fermat constants $\{b_{1},b_{2},b_{3}\}$ and an angle formed by a geodesic arc and a parallel on $S,$ except the standard sphere $S_{R}$ in $\mathbb{R}^{3}.$
\end{theorem}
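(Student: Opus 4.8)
The plan is to read off the three Clairaut constants at the only point the arcs have in common, the weighted Fermat--Torricelli point $A_0$. Since $A_1A_0$, $A_2A_0$ and $A_3A_0$ all pass through $A_0$, they share the same distance $\rho(A_0)$ to the $z$-axis, so Clairaut's cosine relation applied to each arc at $A_0$ yields
\[
c_i=\rho(A_0)\cos\alpha_i,\qquad i=1,2,3,
\]
where $\alpha_i$ is the inclination of $A_iA_0$ to the parallel $XX'$ and $\beta_i=\tfrac{\pi}{2}-\alpha_i$ its angle to the meridian $YY'$. This cleanly separates the data in each $c_i$ into the common radius $\rho(A_0)$, fixed by the position of $A_0$, and the individual inclinations $\alpha_i$.

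Next I would lock the three inclinations together. In the coordinate system $A_0XY$, using the prescribed quadrant placement of $A_1,A_2,A_3$ and the clockwise orientation, an angle chase around $A_0$ produces three linear relations among the meeting angles $\phi_{i0j}$ and the inclinations $\alpha_i$ (equivalently the complements $\beta_i$), which solve to give $\alpha_2$ and $\alpha_3$ as $\alpha_1$ plus explicit linear combinations of the $\phi_{i0j}$. By Lemma~\ref{lem2} each $\phi_{i0j}=\arccos\!\big(-w(b_i,b_j,b_k)\big)$ is a function of the Fermat constants alone, hence so are these offsets, and the Clairaut constants take the form
\[
c_i=\rho(A_0)\cos\!\big(\alpha_1+g_i(b_1,b_2,b_3)\big),\qquad g_1\equiv 0.
\]
Thus the Fermat constants fix only the angular offsets $g_i$ between the arcs, whereas the inclination $\alpha_1$ of one arc to a parallel remains free: rotating the whole tripod about $A_0$ changes $\alpha_1$ while leaving every $\phi_{i0j}$, and therefore (Lemma~\ref{lem3}) every $b_i$, unchanged, yet it moves each $c_i$. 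On a given surface of revolution, where $\rho(A_0)$ is a datum of the position of $A_0$, the single angle $\alpha_1$ is exactly the extra information needed to recover $\{c_1,c_2,c_3\}$, which is the asserted dependence on the Fermat constants and an angle formed by a geodesic arc and a parallel.

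It remains to isolate the standard sphere as the sole exception. On $S_R$ one has $\rho(u)=R\cos u$ and every geodesic is a great circle, and the plan is to combine the three relations $c_i=R\cos u_0\,\cos\alpha_i$ with the spherical identities obeyed by great circles through $A_0$, as furnished by Theorem~\ref{mainres2}, so as to remove the residual inclination $\alpha_1$ and leave each $c_i$ a function of $b_1,b_2,b_3$ alone. I expect this to be the main obstacle, for three points that must be handled together: one has to reconcile the apparent rotational freedom in $\alpha_1$ with the claimed determinacy on the sphere, to show that the removal of the angle forces constant curvature $\rho(u)=R\cos u$, and conversely to exhibit, for every other profile, a genuine one-parameter family of admissible configurations sharing the same Fermat constants but realizing distinct Clairaut constants. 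Proving that the standard sphere is the unique profile for which this collapse occurs is the crux of the argument and the step I expect to demand the most care.
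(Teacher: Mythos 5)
Your argument follows the paper's proof essentially verbatim: Clairaut's cosine relation at $A_{0}$ gives $c_{i}=\rho(t_{0})\cos\alpha_{i}$, the angular relations $\phi_{102}=\pi+\alpha_{1}-\alpha_{2}$, $\phi_{301}=\pi+\alpha_{3}-\alpha_{1}$, $\phi_{203}=\alpha_{2}-\alpha_{3}$ together with Lemma~\ref{lem2} express $\alpha_{2},\alpha_{3}$ as $\alpha_{1}$ plus offsets determined by $b_{1},b_{2},b_{3}$, and expanding the cosine of the sum yields exactly the paper's formulas $\frac{c_{2}}{c_{1}}=w(b_{1},b_{2},b_{3})-|\tan\alpha_{1}|\sqrt{1-w(b_{1},b_{2},b_{3})^{2}}$ and the analogue for $\frac{c_{3}}{c_{1}}$. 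The only divergence is the final step you flag as the crux: the paper's proof of this theorem stops at these formulas and makes no attempt to isolate the sphere as the unique exception --- the spherical case is treated separately in Theorem~\ref{mainres2} via the sine relation and the normalization $\frac{c_{i}}{\sum_{i=1}^{3}c_{i}}$, and no converse (that determinacy of the $c_{i}$ by the $b_{i}$ forces the surface to be a sphere) is proved anywhere in the paper. So the portion you anticipate as demanding the most care is in fact absent from the paper's argument, and what you have already written covers everything the paper actually establishes for this statement.
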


\begin{proof}
By applying Clairaut's cosine relation for each geodesic arc $A_{i}A_{0},$ for $i=1,2,3,$ we get:

\begin{equation}\label{clairautconstants}
\frac{c_{1}}{\cos\alpha_{1}}=\frac{c_{2}}{\cos\alpha_{2}}=\frac{c_{3}}{\cos\alpha_{3}}=\rho(t_{0}).
\end{equation}

where $\alpha_{1}=\beta_{1}+\frac{\pi}{2},$ $\alpha_{2}=\beta_{2}+\frac{\pi}{2},$ $\alpha_{3}=\beta_{3}-\frac{\pi}{2}.$

Therefore, by taking into account that $\frac{\pi}{2}<\alpha_{1}, \alpha_{2}<\pi$ and $0<\alpha_{3}<\frac{\pi}{2},$
(\ref{clairautconstants}) yields:
$c_{1},c_{2}<0$ and $c_{3}>0.$

Hence, we obtain the following angular relations:

\begin{equation}\label{angularrelation1}
\phi_{102}=\pi+\alpha_{1}-\alpha_{2},
\end{equation}

\begin{equation}\label{angularrelation2}
\phi_{301}=\pi+\alpha_{3}-\alpha_{1}
\end{equation}

\begin{equation}\label{angularrelation3}
\phi_{203}=\alpha_{2}-\alpha_{3},
\end{equation}

such that:
\[\phi_{102}+\phi_{203}+\phi_{301}=2\pi.\]

Taking into account (\ref{angularrelation1}) and (\ref{clairautconstants}), we derive a quadratic equation with respect to $\frac{c_{2}}{c_{1}},$
which gives:
\begin{equation}\label{ratio21}
0<\frac{c_{2}}{c_{1}}=w(b_{1},b_{2},b_{3})-|\tan\alpha_{1}|\sqrt{1-w(b_{1},b_{2},b_{3})^2}<\frac{w(b_{1},b_{2},b_{3})}{\cos^2\alpha_{1}}.
\end{equation}

Taking into account (\ref{angularrelation2}) and (\ref{clairautconstants}), we derive a quadratic equation with respect to $\frac{c_{3}}{c_{1}},$
which gives:
\begin{equation}\label{ratio31}
0>\frac{c_{3}}{c_{1}}=w(b_{1},b_{3},b_{2})-|\tan\alpha_{1}|\sqrt{1-w(b_{1},b_{3},b_{2})^2}.
\end{equation}

Hence, (\ref{ratio21}), (\ref{ratio31}) and (\ref{clairautconstants}) yield:

\[c_{1}=\rho(t_{0}) \cos\alpha_{1},\]
\[c_{2}=\rho(t_{0})\cos\alpha_{1}(w(b_{1},b_{2},b_{3})-|\tan\alpha_{1}|\sqrt{1-w(b_{1},b_{2},b_{3})^2}),\]
\[c_{3}=\rho(t_{0})\cos\alpha_{1}(w(b_{1},b_{3},b_{2})-|\tan\alpha_{1}|\sqrt{1-w(b_{1},b_{3},b_{2})^2}).\]

\end{proof}

\begin{remark}
Suppose that we rotate by an angle $\phi$ the triad of geodesic arcs $\{A_{1}A_{0},A_{2}A_{0},A_{3}A_{0}\}$ (weighted geodesic tree) with respect to $A_{0}$ on $S$ having fixed angles $\phi_{102},\phi_{203}, \phi_{301},$ such that the new position $\{A_{1}^{\prime}A_{0},A_{2}^{\prime}A_{0},A_{3}^{\prime}A_{0}\} \in D.$
We note that as a consequence of Theorem~\ref{mainres1} the corresponding Fermat constants $\{b_{10},b_{20},b_{30}\}$ remain the same, but Clairaut constants change, because they depend on the value of $\alpha_{1}.$
\end{remark}

\begin{theorem}\label{mainres2}
The Clairaut constants $\{c_{1},c_{2},c_{3}\}$ are completely determined by the Fermat constants $\{b_{1},b_{2},b_{3}\}:$  $\frac{c_{i}}{\sum_{i=1}^{3}c_{i}}\equiv b_{i}$  for $i=1,2,3,$ on the standard sphere $S_{R}$ in $\mathbb{R}^{3}.$
\end{theorem}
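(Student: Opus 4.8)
The plan is to specialise the formulas for $c_1,c_2,c_3$ produced in the proof of Theorem~\ref{mainres1} to the standard sphere $S_R$, where every geodesic is a great circle, and to show that this rigidity removes the residual dependence on the free angle $\alpha_1$ that survives on a general surface of revolution.

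First I would keep $A_0$ at the origin of the frame $A_0XY$, record from (\ref{clairautconstants}) that $c_i=\rho(t_0)\cos\alpha_i$, and note that the angle relations (\ref{angularrelation1})--(\ref{angularrelation3}) together with Lemma~\ref{lem2} and the sine rule (\ref{ratioji2}) of Lemma~\ref{lem3} are the only algebraic inputs carried over from Theorem~\ref{mainres1}. Then I would bring in the feature peculiar to $S_R$: each arc $A_iA_0$ lies on a great circle $G_i$ cut out by a plane through the centre $O$, with unit normal $\hat n_i$, and on $S_R$ Clairaut's cosine relation acquires the closed geometric form $c_i=R\,(\hat n_i\cdot\hat z)$, i.e. the Clairaut constant is $R$ times the cosine of the inclination of the axis of $G_i$ to the rotation axis. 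Because $A_0\in G_i$ for every $i$, the three normals $\hat n_i$ all lie in the central plane orthogonal to $OA_0$, and each $\hat n_i$ is the quarter-turn of the unit tangent $\vec U_{A_0A_i}$ inside the tangent plane $T_{A_0}S_R$.

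The central step is to feed the weighted balance condition $b_1\vec U_{A_0A_1}+b_2\vec U_{A_0A_2}+b_3\vec U_{A_0A_3}=0$, which is exactly what produces the cosine law (\ref{cosi0j}) of Lemma~\ref{lem2}, through this quarter-turn to obtain $b_1\hat n_1+b_2\hat n_2+b_3\hat n_3=0$, and hence a linear constraint tying the $c_i$ to the $b_i$ on $S_R$. Combining that constraint with the two quadratic relations (\ref{ratio21})--(\ref{ratio31}) then over-determines the configuration: it should fix $\abs{\tan\alpha_1}$ as an explicit function of $\{b_1,b_2,b_3\}$, so that substituting this value back collapses the $\alpha_1$-dependence of $c_2/c_1$ and $c_3/c_1$. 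After rewriting $w(b_1,b_2,b_3)$ and $w(b_1,b_3,b_2)$ by means of the sine rule (\ref{ratioji2}), the normalised constants should reduce to $c_i/\sum_{j}c_j=b_i$. The assertion that $S_R$ is the only such surface is then immediate: on any other surface of revolution Theorem~\ref{mainres1} leaves $c_2/c_1$ and $c_3/c_1$ genuinely dependent on $\alpha_1$, so the Fermat constants cannot pin the Clairaut constants down.

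I expect the main obstacle to lie exactly in this reconciliation. The linear constraint coming from the balance and the sought proportionality $c_i\propto b_i$ are in tension, and the resolution depends on tracking the signs of the $c_i$ --- split as $c_1,c_2<0<c_3$ in Theorem~\ref{mainres1} --- through the two different reference directions $X$ and $X^{\prime}$ used to define $\alpha_1$ versus $\alpha_2,\alpha_3$. Making these orientation conventions consistent, so that the projection of the weighted balance is compatible with the normalisation $b_1+b_2+b_3=1$, is the delicate point on which the stated identity ultimately rests.
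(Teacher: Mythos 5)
Your central step has a genuine gap. The rotated balance relation $b_{1}\hat{n}_{1}+b_{2}\hat{n}_{2}+b_{3}\hat{n}_{3}=0$ is correct, but the linear constraint it yields after dotting with $\hat{z}$ is $b_{1}c_{1}+b_{2}c_{2}+b_{3}c_{3}=0$, and this is an identity, not an equation that can pin down $\alpha_{1}$. Indeed, writing $\hat{r}$ for the unit outward radial vector at $A_{0}$, one has $R\,(\hat{n}_{i}\cdot\hat{z})=R\,(\hat{r}\times\vec{U}_{A_{0}A_{i}})\cdot\hat{z}=R\,\vec{U}_{A_{0}A_{i}}\cdot(\hat{z}\times\hat{r})=\rho(t_{0})\cos\alpha_{i}=c_{i}$, so your constraint is nothing more than the projection of $\sum_{i}b_{i}\vec{U}_{A_{0}A_{i}}=0$ onto the parallel direction at $A_{0}$. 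That projection vanishes on \emph{every} surface of revolution and for \emph{every} rotation of the geodesic tree about $A_{0}$; it therefore carries no sphere-specific information, cannot fix $\abs{\tan\alpha_{1}}$ as a function of the weights, and cannot collapse the $\alpha_{1}$-dependence of the ratios (\ref{ratio21})--(\ref{ratio31}). Worse, with the signed constants $c_{i}=\rho(t_{0})\cos\alpha_{i}$ you are using, $\sum_{i}b_{i}c_{i}=0$ is flatly incompatible with the target $c_{i}=\bigl(\sum_{j}c_{j}\bigr)b_{i}$ unless $\sum_{j}c_{j}=0$: the ``tension'' you flag in your last paragraph is not an orientation-bookkeeping issue but a real obstruction, and no consistent choice of signs for $c_{1},c_{2}<0<c_{3}$ removes it.

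The paper's own proof takes an entirely different and much shorter route, which your outline neither reproduces nor replaces. It does not use the balance vector or the quadratic relations from Theorem~\ref{mainres1} at all: it asserts, via Clairaut's \emph{sine} relation applied to the three great circles through $A_{0}$ viewed as meridians, that $c_{i}=\rho(t_{0})\sin\phi_{j0k}$ (so in particular all $c_{i}>0$, not the mixed-sign constants of (\ref{clairautconstants})), and then divides by the sine rule (\ref{ratioji2}) of Lemma~\ref{lem3}, $b_{i}\propto\sin\phi_{j0k}$ with $b_{1}+b_{2}+b_{3}=1$, to conclude $c_{i}/\sum_{j}c_{j}=b_{i}$. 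The entire content of Theorem~\ref{mainres2} is thus the single identification $c_{i}=\rho(t_{0})\sin\phi_{j0k}$, i.e.\ that the Clairaut angle of the $i$-th geodesic can be taken equal to the opposite angle $\phi_{j0k}$ of the tree; that is the step that genuinely uses the sphere, and it is absent from your proposal. To salvage your approach you would need to establish that identification directly rather than hope the balance condition forces the proportionality.
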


\begin{proof}
By taking into account (\ref{ratioji2}) from Lemma~\ref{lem3} and the Clairaut's sine relation  for three branching geodesics $\{A_{1}A_{0},A_{2}A_{0},A_{3}A_{0}\}$ (meridians), which meet at the weighted Fermat-Torricelli point $A_{0}$ on  $S_{R},$
we get:

\begin{equation}\label{clairautconstantssin}
\frac{c_{1}}{\sum_{i=1}^{3}c_{i}\sin\phi_{203}}=\frac{c_{2}}{\sum_{i=1}^{3}c_{i}\sin\phi_{301}}=\frac{c_{3}}{\sum_{i=1}^{3}c_{i}\sin\phi_{102}}=\frac{\rho(t_{0})}{\sum_{i=1}^{3}c_{i}}
\end{equation}

and

\begin{equation}\label{Fermatconstantssin}
\frac{b_{1}}{\sin\phi_{203}}=\frac{b_{2}}{\sin\phi_{301}}=\frac{b_{3}}{\sin\phi_{102}}=\frac{2b_{1}b_{2}b_{3}}{\sqrt{(b_{1}+b_{2}-b_{3})(b_{2}+b_{3}-b_{1})(b_{3}+b_{2}-b_{1})}},
\end{equation}

such that $c_{i},b_{i}>0,$ for $i=1,2,3.$

By dividing by parts (\ref{clairautconstantssin}) and (\ref{Fermatconstantssin}), respectively, we get

$\frac{c_{i}}{\sum_{i=1}^{3}c_{i}}\equiv b_{i}$  for $i=1,2,3.$

\end{proof}

\section{Concluding Remarks}

Manifolds all of whose geodesics are closed except spheres are studied in \cite{Besse:78} and \cite[11.10,pp.~422-425]{BergerGostiaux:88}.
It would be of interest to derive a relation between Clairaut constants and Fermat constants on manifolds all of whose geodesics are closed, except the standard sphere.

\end{document}